\newtheorem{thm}{Theorem}[section]
\newtheorem{lem}[thm]{Lemma}
\newtheorem{obs}[thm]{Observation}
\newtheorem{fact}[thm]{Fact}
\newtheorem{conj}[thm]{Conjecture}
\newtheorem{ex}[thm]{Example}
\newtheorem{rem}[thm]{Remark}
\newcommand{\lc}{\left\lceil}
\newcommand{\rc}{\right\rceil}
\newcommand{\lf}{\left\lfloor}
\newcommand{\rf}{\right\rfloor}
\def\tr{\operatorname{tr}}
\definecolor{red}{rgb}{.8,0,0}
\definecolor{blu}{rgb}{0,0,1}
\newcommand{\D}{\mathcal D}
\newcommand{\bit}{\begin{itemize}}
\newcommand{\eit}{\end{itemize}}
\newcommand{\ben}{\begin{enumerate}}
\newcommand{\een}{\end{enumerate}}
\newcommand{\beq}{\begin{equation}}
\newcommand{\eeq}{\end{equation}}
\newcommand{\bea}{\begin{eqnarray*}}
\newcommand{\eea}{\end{eqnarray*}}
\newcommand{\bpf}{\begin{proof}}
\newcommand{\epf}{\end{proof}}
\begin{document}
%\linenumbers

%  Leave these commented lines here 
% \input{elaheader-volx-xx.tex}
% \setcounter{page}{1}

% \renewcommand{\thefootnote}{\fnsymbol{footnote}}
% \renewcommand{\thefootnote}{\arabic{footnote}}
% \renewcommand{\theequation}{\thesection.\arabic{equation}}

\bibliographystyle{plain}

\title{Proof of a conjecture of Graham and Lov\'asz concerning unimodality of coefficients of the distance characteristic polynomial of a tree\thanks{Received by the editors on Month x, 200x.
Accepted for publication on Month y, 200y   Handling Editor: .}}
% Leave blank; editors will write the exact dates above

\author{Ghodratollah Aalipour\thanks{Department of Mathematics and Computer Sciences, Kharazmi University, 50 Taleghani St., Tehran, Iran and Department of Mathematical and Statistical Sciences, University of Colorado Denver, CO, USA (alipour.ghodratollah@gmail.com).}\and 
Aida Abiad\thanks{Department of Econometrics and Operations Research, Tilburg University, Tilburg, The Netherlands, (aidaabiad@gmail.com).} \and
 Zhanar Berikkyzy\thanks{Department of Mathematics, Iowa State University, Ames, IA 50011, USA (zhanarb@iastate.edu, chlin@iastate.edu).}\and  
 Leslie Hogben\thanks{Department of Mathematics, Iowa State University, Ames, IA 50011, USA (hogben@iastate.edu) and American Institute of Mathematics, 600 E. Brokaw Road, San Jose, CA 95112, USA (hogben@aimath.org).} \and
 Franklin H.J. Kenter \thanks{Mathematics Department, United State Naval Academy, Chauvenet Hall, 572C Holloway Road, Annapolis, MD 21402, USA (franklin.kenter@gmail.com).}\and 
 Jephian C.-H. Lin\footnotemark[4]\and   
 Michael Tait\thanks{Department of Mathematics, University of California San Diego, La Jolla CA, 92037, USA (mtait@cmu.edu).}
} 
\date{}

\maketitle

\begin{abstract} 
{We establish a conjecture of Graham and Lov\'asz  that the (normalized) coefficients of the distance characteristic polynomial of a tree are unimodal and prove they are log-concave.  We also establish upper and lower bounds on the location of the peak.

\noindent {\em Keywords:} distance matrix, characteristic polynomial, unimodal, log-concave

\noindent {\em 2010 MSC:} 05C50, 05C12, 05C31, 15A18}
\end{abstract}

%%%%%%%%%%%%%%%%%%%%%%%%%%%
\section{Introduction}\label{sintro}

The \emph{distance matrix} $\D(G)$ of a simple, finite, undirected, connected graph $G$ is the matrix indexed by the vertices of $G$ with $(i,j)$-entry equal to the distance  between the vertices $v_i$ and $v_j$, i.e., the length of a shortest path between $v_i$ and $v_j$. The characteristic polynomial of $\D(G)$ is defined by $p_{\D(G)}(x)=\det(xI-\D(G))$ and is called the \emph{distance characteristic polynomial} of $G$. Since $\D(G)$ is a real symmetric matrix, all of  the roots of the distance characteristic polynomial are real.  Distance matrices were introduced in the study of a data communication problem in \cite{GP71}.  This problem involves finding appropriate addresses so that a message can move efficiently through a series of loops from its origin to its destination, choosing the best route at each switching point.  Recently there has been renewed interest in the loop switching problem \cite{PersiTalk}.  There has also been extensive work on distance spectra; see \cite{AH14} for a recent survey.

A sequence $a_0,a_1,a_2,\ldots, a_n$ of real numbers is {\em unimodal} if there is a $k$ such that $a_{i-1}\leq a_{i}$ for $i\leq k$ and $a_{i}\geq a_{i+1}$ for $i\geq k$, and the sequence is {\em log-concave} if $a_j^2\geq a_{j-1}a_{j+1}$ for all $j=1,\dots,  n-1$.
 Recent surveys about unimodality and related topics can be found in \cite{brandensurvey, B15}, and a classical presentation is given in \cite{comtet}.
 
 For a graph $G$ on $n$ vertices, the coefficient in $\det(\D(G)-xI)=(-1)^np_{\D(G)}(x)$ of $x^k$  is denoted by $\delta_k(G)$ by Graham and Lov\'asz \cite{GL78}, so the coefficient of $x^k$ in $p_{\D(G)}(x)$ is $(-1)^n\delta_k(G)$.  
  The following statement appears on page~83 in \cite{GL78} (a {\em tree} is a connected graph that does not have cycles, and $n$ is its {\em order}, i.e., number of vertices):

\begin{quote}{\em  It appears that in fact for each tree $T$, the quantities \break$(-1)^{n-1}\delta_k(T)/2^{n-k-2}$ are {unimodal}  with the maximum value occurring for $k=\big\lfloor \frac n 2\big\rfloor$.  We see no way to prove this, however.
}\end{quote}

 \begin{fact}\label{GLsign}{\rm \cite[Equation (44)]{GL78}} For a tree $T$ on $n$ vertices, \[ (-1)^{n-1}\delta_k(T) > 0\qquad\mbox{ for }0\le k\le n-2.\vspace{-10pt}\]   \end{fact}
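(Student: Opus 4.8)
\emph{Proof plan.} The plan is to reduce the asserted sign pattern to the spectral structure of $\D(T)$, using two facts about the distance matrix of a tree $T$ on $n\ge 2$ vertices: that $\D(T)$ is nonsingular with exactly one positive and $n-1$ negative eigenvalues, and that $\tr\D(T)=0$ since a distance matrix has zero diagonal. Granting these, I would write the spectrum of $\D(T)$ as $p>0>-q_1\ge\cdots\ge-q_{n-1}$ with each $q_i>0$; the trace condition then gives $p=q_1+\cdots+q_{n-1}=e_1$, where $e_m:=e_m(q_1,\dots,q_{n-1})$ denotes the $m$th elementary symmetric polynomial (with $e_m:=0$ for $m<0$ or $m>n-1$). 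Factoring the characteristic polynomial as
\[
\det(\D(T)-xI)=(-1)^n\prod_i (x-\theta_i)=(-1)^{n-1}(p-x)\prod_{i=1}^{n-1}(x+q_i),
\]
and using $\prod_{i=1}^{n-1}(x+q_i)=\sum_{j=0}^{n-1}e_{n-1-j}\,x^j$, comparing coefficients of $x^k$ would yield the clean formula
\[
(-1)^{n-1}\delta_k(T)=p\,e_{n-1-k}-e_{n-k}\qquad(0\le k\le n).
\]

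With that in hand, the task for $0\le k\le n-2$ is to check $p\,e_{n-1-k}-e_{n-k}>0$. For $k=0$ the left side is $p\prod_{i=1}^{n-1}q_i$, which is positive. For $1\le k\le n-2$, I would set $m:=n-k$ (so $2\le m\le n-1$) and substitute $p=e_1$, reducing the claim to the elementary inequality $e_1e_{m-1}\ge m\,e_m$ for positive reals: expanding $e_1e_{m-1}=\big(\sum_\ell q_\ell\big)\big(\sum_{|S|=m-1}\prod_{i\in S}q_i\big)$ and grouping terms, each squarefree degree-$m$ monomial appears exactly $m$ times while the remaining contributions each carry a factor $q_\ell^2$ and are nonnegative. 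Then $e_1e_{m-1}-e_m\ge(m-1)e_m>0$, since $m\ge 2$ and $e_m>0$ (the latter because $m$ does not exceed the number of variables $n-1$ and all $q_i>0$). This would finish the argument granting the two facts.

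The hard part is the first fact: that $\D(T)$ has inertia $(1,0,n-1)$. It is classical (see, e.g., \cite{AH14}), and the way I would establish it is by induction on $n$ from the Graham--Pollak formula $\det\D(T)=(-1)^{n-1}(n-1)2^{n-2}$ \cite{GP71}, which already gives nonsingularity and fixes the sign of the determinant. The base case $n=2$ is the spectrum $\{1,-1\}$. For $n\ge 3$, delete a leaf $v$; distances among the remaining vertices are unchanged, so $\D(T-v)$ is a principal submatrix of $\D(T)$, and by the induction hypothesis it has one positive and $n-2$ negative eigenvalues. Cauchy interlacing then forces the largest eigenvalue of $\D(T)$ to be positive and all but the two largest to be negative; a second positive eigenvalue would make $\det\D(T)$ have sign $(-1)^{n-2}\ne(-1)^{n-1}$, which is impossible, so the inertia is $(1,0,n-1)$. (An alternative to the symmetric-function computation: since $(-1)^k\delta_k(T)$ is the sum of the $(n-k)\times(n-k)$ principal minors of $\D(T)$, and each such minor is the determinant of a symmetric matrix with zero trace and --- by interlacing from the inertia --- at most one positive eigenvalue, every such minor is either $0$ or has sign $(-1)^{n-k-1}$; moreover it is nonzero when its index set induces a subtree, as then the minor equals $\det\D(T')$ for that subtree. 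Summing gives the strict sign directly.)
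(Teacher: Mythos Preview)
Your proof is correct. Both the main route (inertia of $\D(T)$ plus the symmetric-function inequality $e_1e_{m-1}\ge m\,e_m$) and the alternative via principal minors go through cleanly; the induction on $n$ for the inertia, using Graham--Pollak for the determinant sign and Cauchy interlacing on a leaf deletion, is a standard and valid argument.

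The paper, however, does not supply its own proof of this statement at all: it records it as a Fact with a bare citation to \cite[Equation~(44)]{GL78}. In Graham and Lov\'asz, Equation~(44) is obtained from their explicit combinatorial formula expressing each $\delta_k(T)$ as an alternating sum that collapses to a positive combination of subforest counts---indeed, the present paper alludes to this when it says ``the normalized coefficients represent counts of certain subforests of the tree.'' So your approach is genuinely different: you replace the subforest combinatorics with a purely spectral argument. What that buys you is self-containment (you need only Graham--Pollak and interlacing, both short), and a method that transparently explains \emph{why} the signs line up---one positive eigenvalue and the trace constraint force $p=e_1$, and then Newton-type inequalities finish it. What the combinatorial route buys is more: it gives an exact interpretation of $(-1)^{n-1}\delta_k(T)$, not merely its sign, which is what underlies the peak-location heuristics later in the paper.
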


Throughout this discussion, the order of a graph is assumed to be at least three (any sequence $a_0$ is trivially unimodal and the peak location is 0). For a graph $G$ of order $n$ and $0\le k\le n-2$, define $ d_k(G)=\left(\frac 1 {2^{n-2}}\right)2^k |\delta_k(G)|$. We call  the numbers $d_k(G)$ the {\em normalized coefficients}.  If $T$ is a tree, then $d_k(T)=(-1)^{n-1}\delta_k(T)/2^{n-k-2}$ by Fact \ref{GLsign}.  For a tree, the normalized coefficients represent counts of certain subforests of the tree \cite{GL78}.   The conjecture in \cite{GL78} can be rephrased as: 

\begin{quote}{\em 
For a tree $T$ of order $n$, the sequence of normalized coefficients \break $d_0(T),\dots,d_{n-2}(T)$ is unimodal and  the peak occurs at $\big\lfloor \frac n 2\big\rfloor$.}\end{quote}

The conjecture regarding the location of the peak was disproved by Collins \cite{C89} who showed that for both stars and paths the sequence $d_0(T),\dots,d_{n-2}(T)$ is unimodal, but for paths the peak is at approximately $\big(1-\frac 1 {\sqrt 5}\big)n$ (and at $\big\lfloor \frac n 2\big\rfloor$ for stars).\footnote{Despite use of the term {\em coefficient}  throughout \cite{C89}, the sequence discussed there  is $d_k(T)$, not $\delta_k(T)$.}  Conjecture~9 in \cite{C89}, which Collins attributes to Peter Shor, is: 

\begin{conj}[Collins-Shor]The [normalized] coefficients  of the distance characteristic polynomial for any tree $T$ with $n$ vertices are unimodal  with peak between $\big\lfloor \frac n 2\big\rfloor$ and $\big\lceil n- \frac n {\sqrt 5}\big\rceil$.\label{peakconj}
\end{conj}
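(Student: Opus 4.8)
The plan is to prove the two halves of Conjecture \ref{peakconj} separately, treating the unimodality statement and the peak-location bounds with somewhat different tools. For unimodality, the strategy I would pursue is to prove the stronger property that the sequence $d_0(T),\dots,d_{n-2}(T)$ is log-concave, since a log-concave sequence of positive terms (positivity being guaranteed by Fact \ref{GLsign}) is automatically unimodal. To get log-concavity, I would look for an interlacing or recursive structure: ideally one shows that $d_{k-1}(T)d_{k+1}(T)\le d_k(T)^2$ by relating the $d_k(T)$ to counts of subforests of $T$ (as indicated in the excerpt, following \cite{GL78}) and then either (a) building an injection from pairs of subforests on the ``$(k-1,k+1)$ side'' to pairs on the ``$(k,k)$ side,'' or (b) exhibiting the generating polynomial $\sum_k d_k(T)x^k$ as having only real roots, which would give log-concavity for free. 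Real-rootedness would be the cleanest route, but $\sum_k (-1)^{n-1}\delta_k(T)x^{n-k}$ is essentially $\det(xI + \D(T))$ up to sign adjustments on alternating coefficients, and $\D(T)$ has mixed-sign eigenvalues, so the characteristic polynomial itself is real-rooted but the sign-twisted normalized version need not be; hence I expect route (a), a combinatorial injection on subforests, to be the workhorse.

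For the location of the peak, the lower bound $\lfloor n/2\rfloor$ and the upper bound $\lceil n - n/\sqrt5\rceil$ should be attacked via explicit estimates on ratios $d_{k+1}(T)/d_k(T)$. Concretely, once log-concavity is in hand, the peak is the largest $k$ with $d_k(T)\ge d_{k-1}(T)$, so it suffices to show $d_k(T)\ge d_{k-1}(T)$ for all $k\le \lfloor n/2\rfloor$ (lower bound) and $d_k(T)\le d_{k-1}(T)$ for all $k > \lceil n-n/\sqrt5\rceil$ (upper bound). The natural tool here is a recursion for $\delta_k(T)$ (or $d_k(T)$) obtained by deleting a pendant vertex or pendant edge from $T$: write $T'$ for $T$ with a leaf removed and relate $\D(T)$ to $\D(T')$ by a rank-one or low-rank perturbation plus a bordering row/column, then expand the determinant to get $\delta_k(T)$ in terms of $\delta_{k}(T')$, $\delta_{k-1}(T')$, and terms involving distances from the deleted leaf. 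Iterating this and comparing with the path $P_n$ (the extremal case for the upper bound, where the peak really is near $(1-1/\sqrt5)n$) and the star (extremal for the lower bound) would pin down the constants; the $1/\sqrt5$ is a strong hint that a Fibonacci-type recursion for the path's coefficients is lurking, and matching a general tree against that recursion termwise is the shape of the argument.

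The main obstacle I anticipate is the upper bound $\lceil n - n/\sqrt5\rceil$ for an arbitrary tree. Log-concavity and the lower bound $\lfloor n/2\rfloor$ feel more robust — they should follow from fairly soft structural facts about the subforest counts and from the fact that the $k=\lfloor n/2\rfloor$ coefficient dominates its immediate neighbors in a way that survives for all trees — but showing the peak cannot drift past the path's value requires a genuinely extremal argument: one must show that among all trees on $n$ vertices, the path pushes the peak as far right as possible. That is a comparison across the whole space of trees, not a property internal to a single tree, so I would expect to need either an ordering of trees under some operation (e.g., ``unfolding'' branches toward a path) under which the peak location is monotone, or a direct quantitative bound on $d_{k-1}(T)/d_k(T)$ valid for every tree that is tight only at the path. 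Establishing such a monotonicity-under-unfolding result, and verifying that each unfolding step moves the peak rightward (or at least not leftward past the target), is where I expect the real work to be; the rest is bookkeeping with determinant expansions and Fibonacci-flavored inequalities.
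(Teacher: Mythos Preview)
Two substantial issues.

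\textbf{Unimodality.} You actually identified the paper's approach --- real-rootedness --- and then talked yourself out of it. The distance matrix $\D(T)$ is real symmetric, so $p_{\D(T)}(x)$ is real-rooted, and Newton's inequalities immediately give log-concavity of its coefficient sequence $(-1)^n\delta_0(T),\dots,(-1)^n\delta_{n-2}(T),0,1$. By Fact~\ref{GLsign} the first $n-1$ terms all have the same sign, so their absolute values are log-concave and positive, hence unimodal. Your objection was that the \emph{normalized} polynomial $\sum_k d_k(T)x^k$ might not be real-rooted; but you do not need it to be. Log-concavity is preserved under the transformation $a_k\mapsto s\,c^k a_k$ (this is Observation~\ref{modfromplain}), so once you have log-concavity of $|\delta_k(T)|$ you get it for $d_k(T)=2^{k-(n-2)}|\delta_k(T)|$ for free. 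The whole argument is a few lines; no combinatorial injection on subforests is needed.

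\textbf{Peak location.} Here there is a more basic problem: the paper does \emph{not} prove the bounds $\lfloor n/2\rfloor$ and $\lceil n-n/\sqrt5\rceil$. Conjecture~\ref{peakconj} remains open. What the paper establishes is a weaker upper bound of $\lceil \tfrac{2}{3}n\rceil$ (improvable to $\lceil\tfrac{2-\rho}{3-\rho}n\rceil$ when $T$ has at least $\rho\binom{n-1}{2}$ paths of length two) and a lower bound of $\lfloor (n-2)/(1+d)\rfloor$ depending on the diameter $d$. Both come from the Newton-type inequality $a_j^2\ge \tfrac{\binom{n}{j}^2}{\binom{n}{j-1}\binom{n}{j+1}}a_{j-1}a_{j+1}$ applied to the real-rooted polynomial $\ell_T(x)=-2^{-(n-2)}\det(2xI-\D(T))$, telescoped to bound $a_{j+1}/a_j$ in terms of $a_1/a_0$ and $a_j/a_{j-1}$ in terms of $a_{n-2}/a_{n-3}$; the needed ratios are then controlled by explicit formulas for $d_0,d_1$ (from \cite{GP71,EGG76}) and by a trace estimate $d_{n-3}/d_{n-2}=\tfrac{1}{3}\tr(\D^3)/\tr(\D^2)\le\tfrac{1}{3}\lambda_{\max}<\tfrac{1}{3}nd$. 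Your outline --- leaf-deletion recursions, Fibonacci asymptotics for $P_n$, and a monotonicity-under-unfolding argument to show the path is extremal --- is a plausible program, but it is not a proof, and the extremal step you flag as ``the real work'' is exactly the part nobody has done.
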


In  \cite{C89}, Conjecture 9 is stated without the floor or ceiling;  $\big\lfloor \frac n 2\big\rfloor$ is clearly  the intended  lower bound, since \cite[Theorem 1]{C89} establishes $\lf \frac n 2\rf$  as the peak location for a star.  An examination of the proof of   \cite[Theorem~3]{C89} shows that the ceiling is needed in the upper bound (although the path $P_n$ may attain either the floor or the ceiling depending on $n$).    
This conjecture is included in \cite{AH14} as Conjecture~2.6 (again without ``normalized" and without the floor and ceiling), followed by the comment, {\em ``No more results are known about that conjecture."} 

The log-concavity of the sequences $d_k(T)$ of normalized coefficients and $|\delta_k(T)|$  of absolute values of coefficients   are equivalent,  and we show  in Theorem~\ref{Tunimod} below that both sequences $|\delta_0(T)|,\dots,|\delta_{n-2}(T)|$ and   $d_0(T),\dots,d_{n-2}(T)$ are log-concave and unimodal.  In Section~\ref{sspeak} we  establish an upper bound of $\lc\frac 2 3 n\rc$ for the peak location of the normalized coefficients. We also show that the coefficient $\frac 2 3$ can be improved when the tree is ``star-like'' with many paths of length 2.  Further, we give a lower bound of $\frac{n}{d+1}$ where $d$ is the diameter of the tree (i.e.,  the number of edges in a longest path in the tree). Finally,  in Section~\ref{ssgraph} we give an example showing unimodality need  not be  true for graphs that are not trees.

To establish these results, we need some additional definitions and facts. The next observation is immediate from the definition.

\begin{obs}\label{modfromplain} 
Let $a_0,a_1,a_2,\ldots, a_n$ be a sequence of real numbers, let $c$ and $s$ be nonzero real numbers, and define $b_k=sc^ka_k$.  Then $a_0,a_1,a_2,\ldots, a_n$ is log-concave if and only if $b_0,b_1,b_2,\ldots, b_n$ is log-concave. \end{obs}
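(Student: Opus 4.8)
The plan is to check the definition of log-concavity directly, exploiting that the map $a_k\mapsto b_k=sc^k a_k$ multiplies the quantity governing log-concavity at index $j$ by a factor that is positive and depends only on $j$. First I would fix $j$ with $1\le j\le n-1$ and compute
\[
b_j^2-b_{j-1}b_{j+1}=s^2c^{2j}a_j^2-s^2c^{(j-1)+(j+1)}a_{j-1}a_{j+1}=s^2c^{2j}\bigl(a_j^2-a_{j-1}a_{j+1}\bigr),
\]
where the exponent identity $(j-1)+(j+1)=2j$ is exactly what lets the common factor $s^2c^{2j}$ pull out.

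Since $s\ne 0$ and $c\ne 0$, we have $s^2c^{2j}>0$ (note we do not need $c$ or $s$ to be positive, only nonzero, as both enter squared), so $b_j^2-b_{j-1}b_{j+1}\ge 0$ if and only if $a_j^2-a_{j-1}a_{j+1}\ge 0$. Because $j$ ranges over $1,\dots,n-1$, this shows that $b_0,b_1,\dots,b_n$ is log-concave precisely when $a_0,a_1,\dots,a_n$ is, which is the claim.

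There is no real obstacle here; the statement is a one-line computation, and the reason it is worth isolating as an observation is that it will be invoked repeatedly (with $s=1/2^{\,n-2}$ and $c=2$) to pass freely between the sequences $\delta_k(T)$, $|\delta_k(T)|$, and the normalized coefficients $d_k(T)$ when establishing log-concavity in Theorem~\ref{Tunimod}. Alternatively one could cite the known fact that log-concavity is preserved under termwise multiplication by a geometric sequence, but the direct verification above is cleaner and self-contained.
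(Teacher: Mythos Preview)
Your proof is correct and is exactly the direct verification the paper has in mind; the paper does not write out a proof but simply declares the observation ``immediate from the definition,'' and your one-line computation $b_j^2-b_{j-1}b_{j+1}=s^2c^{2j}(a_j^2-a_{j-1}a_{j+1})$ with $s^2c^{2j}>0$ is precisely that immediate check.
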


   Consider a real polynomial $p(x)=a_nx^n+\dots + a_1x+a_0$.  The \emph{coefficient sequence} of $p$ is the sequence $a_0,a_1,a_2,\ldots, a_n$. The polynomial $p$ is \emph{real-rooted} if all roots of $p$ are real (by convention, constant polynomials are considered real-rooted).   The next result is known (see, for example, \cite{brandensurvey, B15, comtet}).  It is straightforward to adapt the proof  of \cite[Lemma 1.1]{brandensurvey} 
 or  \cite[Theorem B, p.~270]{comtet}, which  are stated with the additional assumption that the polynomial coefficients are nonnegative, to the more general case.

\begin{lem}\label{unimodequiv}$\null$
\begin{enumerate}[(a)]
\item\label{rrlc} If $p(x)=a_nx^n+\dots + a_1x+a_0$ is a real-rooted polynomial, then:
\begin{enumerate}[(i)]
\item\label{rrlc1} $\frac{a_j^2}{\binom{n}{j}^2} \geq \frac{ a_{j+1}{a_{j-1}}}{\binom{n}{j+1}\binom{n}{j-1}}$ for $j=1,\dots,n-1$.
\item\label{rrlc2} The coefficient sequence $a_i$ of $p$ is log-concave.
\end{enumerate}
\item\label{lcum} If $a_0,a_1,a_2,\dots, a_n$ is positive and log-concave, then $a_0,a_1,a_2,\ldots, a_n$ is unimodal.
\end{enumerate}
\end{lem}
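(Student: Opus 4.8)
The plan is to prove part~(a) as Newton's inequalities for real-rooted polynomials and then deduce part~(b) from a short monotonicity argument on ratios of consecutive terms.

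For part~(a)(i), I would reduce to the degree-two case by iterating two operations that preserve real-rootedness: differentiation $p\mapsto p'$ (real-rootedness is preserved by Rolle's theorem, counting multiplicities), and reversal $p\mapsto x^{\deg p}p(1/x)$, which sends each nonzero root $r$ of $p$ to $1/r$ and so is again real-rooted (of possibly smaller degree when $p(0)=0$). Fix $j$ with $1\le j\le n-1$. Differentiating $p$ exactly $j-1$ times produces a real-rooted polynomial whose coefficients of $x^0,x^1,x^2$ are $(j-1)!\,a_{j-1}$, $j!\,a_j$, and $\frac{(j+1)!}{2}\,a_{j+1}$; reversing it and then differentiating $n-j-1$ further times brings exactly those three coefficients to the bottom three positions of a real-rooted polynomial $Ax^2+Bx+C$ of degree at most two, where $A,B,C$ equal $a_{j-1},a_j,a_{j+1}$ multiplied by explicit positive factorial constants. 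Since a real-rooted polynomial of degree at most two satisfies $B^2\ge 4AC$, substituting the constants and cancelling factorials turns this into exactly $\frac{a_j^2}{\binom{n}{j}^2}\ge\frac{a_{j+1}a_{j-1}}{\binom{n}{j+1}\binom{n}{j-1}}$. No sign hypothesis on the $a_i$ enters anywhere---Rolle's theorem, the reversal identity, and ``real roots imply nonnegative discriminant'' are all sign-free---which is precisely the adaptation of \cite[Lemma 1.1]{brandensurvey} / \cite[Theorem B, p.~270]{comtet} alluded to above.

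For part~(a)(ii), I would feed part~(a)(i) into the elementary identity
\[
\frac{\binom{n}{j}^2}{\binom{n}{j-1}\binom{n}{j+1}}=\frac{(j+1)(n-j+1)}{j(n-j)}\ge 1\qquad(1\le j\le n-1),
\]
the inequality holding because $(j+1)(n-j+1)-j(n-j)=n+1>0$. Writing $c_j$ for this ratio, part~(a)(i) reads $a_j^2\ge c_j\,a_{j-1}a_{j+1}$; if $a_{j-1}a_{j+1}\le 0$ then $a_j^2\ge 0\ge a_{j-1}a_{j+1}$, and if $a_{j-1}a_{j+1}>0$ then $c_j\,a_{j-1}a_{j+1}\ge a_{j-1}a_{j+1}$, so $a_j^2\ge a_{j-1}a_{j+1}$ in every case. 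For part~(b), with all $a_i>0$ set $r_k=a_k/a_{k-1}>0$; log-concavity says exactly $r_k\ge r_{k+1}$, so $r_1\ge r_2\ge\cdots\ge r_n$. Taking $k$ to be the largest index with $r_k\ge 1$ (and $k=0$ if $r_1<1$) gives $a_{i-1}\le a_i$ for $i\le k$ (as $r_i\ge r_k\ge 1$) and $a_i\ge a_{i+1}$ for $i\ge k$ (as $r_{i+1}\le r_{k+1}<1$), which is unimodality with peak index $k$.

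The step I expect to be the main obstacle is the bookkeeping in part~(a)(i): tracking how the coefficients transform under the iterated differentiations and reversals, handling the degenerate situations (a zero root of $p$, or $a_{j-1}$ or $a_{j+1}$ vanishing, so that the reduced polynomial drops below degree two---these cases are checked directly, e.g.\ when $a_{j-1}=0$ the right-hand side of part~(a)(i) is $\le 0$), and verifying that the factorial constant produced by $B^2\ge 4AC$ collapses exactly to $\binom{n}{j}^2/\big(\binom{n}{j-1}\binom{n}{j+1}\big)$. Once part~(a)(i) is in hand, parts~(a)(ii) and~(b) follow immediately.
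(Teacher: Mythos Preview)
Your proposal is correct and is precisely the standard Newton's-inequalities argument (differentiate, reverse, differentiate again, read off the discriminant) that the paper has in mind when it cites \cite{brandensurvey,comtet} in lieu of a proof; the paper does not supply its own argument for this lemma but simply remarks that the cited proofs go through without the nonnegativity hypothesis, which is exactly what you verify.
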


%%%%%%%%%%%%%%%%%%%%%%%%%%%

\section{Proof of Graham and Lov\'asz' unimodality conjecture for the distance characteristic polynomial of a tree}\label{sunimod}$\null$

\begin{thm}\label{Tunimod} 
Let $T$ be  a tree  of order $n$.
\ben
\item[$(i)$] The  coefficient sequence  of the distance characteristic polynomial $p_{\D(T)}(x)$ is log-concave.
\item[$(ii)$] The sequence $|\delta_0(T)|,\dots,|\delta_{n-2}(T)|$ of absolute values  of coefficients of the distance characteristic polynomial  is log-concave and unimodal.
\item[$(iii)$] The sequence $d_0(T),\dots,d_{n-2}(T)$ of normalized coefficients  of the distance characteristic polynomial    is log-concave and unimodal.
\een
\end{thm}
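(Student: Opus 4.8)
The plan is to deduce all three parts from two ingredients that are already on the table: $p_{\D(T)}(x)$ is real-rooted because $\D(T)$ is a real symmetric matrix, and Fact~\ref{GLsign}, which fixes the signs of the coefficients $\delta_k(T)$ for $0\le k\le n-2$. Write $p_{\D(T)}(x)=a_nx^n+\cdots+a_1x+a_0$, so that $a_k=(-1)^n\delta_k(T)$. Part $(i)$ is then immediate: a real-rooted polynomial has a log-concave coefficient sequence by Lemma~\ref{unimodequiv} (the implication from real-rootedness to log-concavity of the coefficient sequence), so $a_0,\dots,a_n$ is log-concave. The only mildly delicate point is that $a_{n-1}=0$ because $\tr\D(T)=0$; this is harmless, since $a_{n-1}^2=0\ge a_{n-2}a_n$ holds as $a_{n-2}<0<a_n$, and in any case it is subsumed by the cited lemma.

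For part $(ii)$, the key observation is that the whole relevant stretch of coefficients has a single sign. By Fact~\ref{GLsign}, $(-1)^{n-1}\delta_k(T)>0$ for $0\le k\le n-2$, so $a_0,\dots,a_{n-2}$ are all strictly negative and $|a_k|=(-1)^{n-1}\delta_k(T)=|\delta_k(T)|$ on that range. Restricting the log-concavity inequality from part $(i)$ to interior indices $1\le j\le n-3$ — so that $a_{j-1}$, $a_j$, $a_{j+1}$ all lie inside this negative block — and passing to absolute values turns $a_j^2\ge a_{j-1}a_{j+1}$ into $|\delta_j(T)|^2\ge |\delta_{j-1}(T)|\,|\delta_{j+1}(T)|$. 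This is exactly log-concavity of $|\delta_0(T)|,\dots,|\delta_{n-2}(T)|$; since this sequence is also strictly positive by Fact~\ref{GLsign}, Lemma~\ref{unimodequiv} (positive and log-concave implies unimodal) yields unimodality.

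Part $(iii)$ is a rescaling of part $(ii)$. From Fact~\ref{GLsign}, $d_k(T)=(-1)^{n-1}\delta_k(T)/2^{n-k-2}=\big(2^{-(n-2)}\big)\,2^{k}\,|\delta_k(T)|$, so Observation~\ref{modfromplain} with $s=2^{-(n-2)}$ and $c=2$ transfers log-concavity from $|\delta_k(T)|$ to $d_k(T)$, and positivity once more gives unimodality via Lemma~\ref{unimodequiv}.

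I do not expect a real obstacle here beyond having Fact~\ref{GLsign} available: the one thing to watch is the index bookkeeping, namely staying within $0\le k\le n-2$ when taking absolute values, so that the vanishing coefficient $a_{n-1}$ and the leading coefficient $a_n=1$ — which spoil the uniform sign pattern — never enter the argument. The genuinely hard work of the paper lies not in this theorem but in the peak-location estimates developed in the later sections.
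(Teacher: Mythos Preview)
Your proposal is correct and follows essentially the same approach as the paper: real-rootedness of $p_{\D(T)}$ gives log-concavity of the coefficient sequence, Fact~\ref{GLsign} forces a uniform sign on $a_0,\dots,a_{n-2}$ so that log-concavity passes to the absolute values, positivity then yields unimodality, and Observation~\ref{modfromplain} handles the rescaling to the $d_k(T)$. Your added remarks on $a_{n-1}=0$ and on restricting to $1\le j\le n-3$ make the index bookkeeping more explicit than the paper does, but the arguments are otherwise identical.
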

\begin{proof}
Let $\D(T)$ be the distance matrix of $T$. Since $p_{\D(T)}(x)$ is real-rooted, the coefficient sequence
$
(-1)^{n}\delta_0(T),\dots,(-1)^{n}\delta_{n-2}(T), 0,1
$ 
is log-concave by Lemma~\ref{unimodequiv}(i).  

Therefore, the sequence $(-1)^{n}\delta_0(T),\dots,(-1)^{n}\delta_{n-2}(T)$ is log-concave. By Fact \ref{GLsign}, $ (-1)^{n-1}\delta_k(T) > 0$ for $0\le k\le n-2$, so we have $(-1)^n\delta_k(T)<0$ for $0\le k\le n-2$. Since all of the terms $(-1)^{n}\delta_0(T),\ldots,(-1)^{n}\delta_{n-2}(T)$ are negative,  the sequence of their absolute values $\{|\delta_k(T)|\}_{k=0}^{n-2}$ is log-concave and positive. Then by Lemma~\ref{unimodequiv}\eqref{lcum}, the sequence $|\delta_0(T)|,\ldots,|\delta_{n-2}(T)|$ is unimodal.

Since $ d_k(T)=\left(\frac 1 {2^{n-2}}\right)2^k |\delta_k(T)|$, the log-concavity of the sequence $\{d_k(T)\}_{k=0}^{n-2}$ then follows from Observation~\ref{modfromplain}.  Since  $\{d_k(T)\}_{k=0}^{n-2}$ is positive, it is unimodal by Lemma~\ref{unimodequiv}\eqref{lcum}.
\end{proof}

%%%%%%%%%%%%%%%%%%%%%%%%%%%

\section{Bounds on the peak location}\label{sspeak}

For a tree $T$ of order $n$, the question of the  location of the peak of the unimodal sequence of normalized coefficients $\{d_k(T)\}_{k=0}^{n-2}$  remains open.  Note that Conjecture \ref{peakconj} says that the peak location is between $\lfloor 0.5n\rfloor$ and roughly $\lceil 0.5528 n\rceil$.  
Computations on {\em Sage} \cite{code, sage} confirm this conjecture for all trees of order at most $20$.  
In this section we show that the peak location is at most $\lceil 0.6667n\rceil$  for all trees of order $n$, and at least $\left \lfloor \frac{n-2}{1+d} \right \rfloor $ for a tree of  diameter $d$ and order $n$. Furthermore,  the upper bound we establish is better for a  ``star-like'' tree, that is, when the tree has a high fraction of the number of paths of length 2 in a star (which attains the maximum possible number of paths of length 2).

\begin{obs}
\label{scaledpoly}
Let $T$ be a tree on $n$ vertices and define
\[\ell_T(x)=-\frac{1}{2^{n-2}}\det(2xI-\D(T)).\]
Then $\ell_T(x)$ is a real-rooted polynomial with coefficients $-4$ for $x^n$, $0$ for $x^{n-1}$, and $d_k(T)>0$ for $x^k$ when $0\leq k\leq n-2$.
\end{obs}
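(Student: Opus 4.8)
The plan is to deduce everything from the ordinary distance characteristic polynomial $p_{\D(T)}(x)=\det(xI-\D(T))$ via the linear change of variable $x\mapsto 2x$, and then to read off the coefficients using the definition of the $\delta_k(T)$ together with Fact~\ref{GLsign}.

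For real-rootedness, I would observe that $\D(T)$ is a real symmetric matrix, so all eigenvalues of $\D(T)$ are real; the roots of $\det(2xI-\D(T))=p_{\D(T)}(2x)$ are then exactly $\tfrac12\lambda$ with $\lambda\in\spec(\D(T))$, hence real, and multiplying by the nonzero constant $-\tfrac1{2^{n-2}}$ leaves the root set unchanged. Thus $\ell_T(x)$ is real-rooted.

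For the coefficients, write $p_{\D(T)}(x)=\sum_{k=0}^n c_kx^k$. By the definition $\det(\D(T)-xI)=(-1)^np_{\D(T)}(x)$ and the Graham--Lov\'asz notation, $c_k=(-1)^n\delta_k(T)$ for all $k$. Two of these are immediate: the leading coefficient of $p_{\D(T)}$ is $1$, so $\delta_n(T)=(-1)^n$; and $\tr\D(T)=0$ because the diagonal of a distance matrix is zero, so the coefficient of $x^{n-1}$ vanishes, i.e.\ $\delta_{n-1}(T)=0$. Substituting $2x$,
\[\ell_T(x)=-\frac1{2^{n-2}}\,p_{\D(T)}(2x)=-\frac1{2^{n-2}}\sum_{k=0}^n(-1)^n\delta_k(T)\,2^kx^k,\]
so the coefficient of $x^k$ in $\ell_T(x)$ equals $(-1)^{n-1}2^{\,k-n+2}\delta_k(T)$. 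For $k=n$ this is $(-1)^{n-1}\cdot4\cdot(-1)^n=-4$; for $k=n-1$ it is $0$; and for $0\le k\le n-2$, Fact~\ref{GLsign} gives $(-1)^{n-1}\delta_k(T)=|\delta_k(T)|>0$, whence the coefficient equals $2^{\,k-n+2}|\delta_k(T)|=\big(\tfrac1{2^{n-2}}\big)2^k|\delta_k(T)|=d_k(T)>0$, as claimed.

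I expect no real obstacle here: the statement is a bookkeeping consequence of the real-rootedness of $p_{\D(T)}$, the vanishing trace of $\D(T)$, and Fact~\ref{GLsign}. The only thing that needs care is keeping the signs and the normalizing factor $2^{n-2}$ straight, which the two displayed identities above already handle.
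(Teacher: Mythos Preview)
Your argument is correct. The paper states this as an observation without proof, and your verification---substituting $2x$ into $p_{\D(T)}$, reading off the leading two coefficients from the monic form and the vanishing trace, and identifying the remaining coefficients with $d_k(T)$ via Fact~\ref{GLsign}---is exactly the intended unpacking.
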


\begin{lem}
\label{peaklem}
Let $a_0,a_1,a_2,\ldots , a_{n-2}$ be a  unimodal sequence with $a_i> 0$ for $i=0,\ldots ,n-2$ such that $\sum_{k=0}^na_kx^k$ is a real-rooted polynomial.  
\begin{enumerate}
\item If for some index $j\neq n,n-1$ 
\[\frac{n-j}{n(j+1)}\cdot \frac{a_1}{a_0}<1,\]  
then the peak location is at most $j$. 
\item If for some index $j\neq n,n-1,0$ 
\[\frac{(n-2)(n-j+1)}{3j}\cdot \frac{a_{n-2}}{a_{n-3}}>1,\]
then the peak location is at least $j$. 
\end{enumerate}
\end{lem}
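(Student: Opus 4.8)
The plan is to exploit Lemma~\ref{unimodequiv}(\ref{rrlc1}) applied to the real-rooted polynomial $\sum_{k=0}^n a_k x^k$ (with $a_{n-1}=0$ permitted, since $a_{n-1}=0$ only makes the relevant inequalities easier or needs to be handled separately). The key observation is that Newton's inequalities in the form
\[
\frac{a_j^2}{\binom nj^2}\ \ge\ \frac{a_{j+1}a_{j-1}}{\binom n{j+1}\binom n{j-1}}
\]
chain multiplicatively: writing $r_j = a_j/a_{j-1}$, the inequality says $r_{j+1}/r_j \le \binom n{j+1}\binom n{j-1}/\binom nj^2 = \frac{j(n-j)}{(j+1)(n-j+1)} < 1$, so the ratios $r_1 > r_2 > \cdots$ are strictly decreasing (wherever defined). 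Consequently the sequence $a_0,a_1,\dots,a_{n-2}$ is automatically unimodal, and the peak is at the largest $j$ with $r_j \ge 1$; equivalently, the peak is $\le j$ iff $r_{j+1} < 1$, and the peak is $\ge j$ iff $r_j \ge 1$ (using positivity of the $a_i$).

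**Proving part 1.** For the upper bound I would bound $r_{j+1} = a_{j+1}/a_j$ from above by telescoping the ratio inequalities $r_{i+1}/r_i \le \frac{i(n-i)}{(i+1)(n-i+1)}$ down to $r_1 = a_1/a_0$. That is,
\[
r_{j+1} = r_1\prod_{i=1}^{j}\frac{r_{i+1}}{r_i}\ \le\ \frac{a_1}{a_0}\prod_{i=1}^{j}\frac{i(n-i)}{(i+1)(n-i+1)}.
\]
The product telescopes: $\prod_{i=1}^j \frac{i}{i+1} = \frac{1}{j+1}$ and $\prod_{i=1}^j \frac{n-i}{n-i+1} = \frac{n-j}{n}$, giving $r_{j+1} \le \frac{a_1}{a_0}\cdot\frac{n-j}{n(j+1)}$. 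Hence if $\frac{(n-j)}{n(j+1)}\cdot\frac{a_1}{a_0} < 1$ then $r_{j+1}<1$, i.e., $a_{j+1}<a_j$; combined with strict monotonicity of the ratios this forces $a_j > a_{j+1} > a_{j+2} > \cdots$, so the peak is at most $j$. (One must also check: if $a_{n-1}=0$, the Newton inequality at $j=n-1$ still reads $0 \ge a_n a_{n-2}/(\text{positive})$, which is false unless $a_n \le 0$; this is why the hypothesis restricts to $j \ne n, n-1$ — the telescoping only uses indices $1 \le i \le j \le n-2$, where all coefficients are positive, so the argument is unaffected.)

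**Proving part 2.** For the lower bound the idea is symmetric but anchored at the top end. Using the same ratio monotonicity $r_{i+1}/r_i \le \frac{i(n-i)}{(i+1)(n-i+1)}$, now run it \emph{upward} from $r_j$ to $r_{n-2}$: for $j \le i \le n-3$ we have $r_{i+1} \le r_i \cdot \frac{i(n-i)}{(i+1)(n-i+1)}$, so
\[
r_{n-2} = r_j\prod_{i=j}^{n-3}\frac{r_{i+1}}{r_i}\ \le\ r_j\prod_{i=j}^{n-3}\frac{i(n-i)}{(i+1)(n-i+1)},
\]
hence $r_j \ge r_{n-2}\Big/\prod_{i=j}^{n-3}\frac{i(n-i)}{(i+1)(n-i+1)}$. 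The telescoping product $\prod_{i=j}^{n-3}\frac{i}{i+1} = \frac{j}{n-2}$ and $\prod_{i=j}^{n-3}\frac{n-i}{n-i+1} = \frac{3}{n-j+1}$, so $\prod_{i=j}^{n-3}\frac{i(n-i)}{(i+1)(n-i+1)} = \frac{3j}{(n-2)(n-j+1)}$, giving $r_j \ge \frac{(n-2)(n-j+1)}{3j}\cdot\frac{a_{n-2}}{a_{n-3}}$. Thus the stated hypothesis gives $r_j > 1$, i.e.\ $a_j > a_{j-1}$, and by ratio monotonicity $a_0 < a_1 < \cdots < a_{j-1} < a_j$, so the peak is at least $j$. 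Here the restriction $j \ne 0$ is needed so that $a_{j-1}$ exists and $r_j$ is defined, and $j \ne n-1, n$ ensures the product runs over indices with positive coefficients.

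**Main obstacle.** The only genuinely delicate point is bookkeeping around the zero coefficient $a_{n-1}=0$ (equivalently the missing $x^{n-1}$ term coming from $\tr\D(T)=0$): one must make sure every Newton inequality invoked involves only indices $1 \le i \le n-2$ where $a_i > 0$, so that division by $a_i$ is legitimate and all inequalities preserve direction. This is exactly why both parts of the lemma carry the index restrictions; once that is checked, the rest is the routine telescoping computation sketched above together with the elementary fact (immediate from strict monotonicity of the ratios $r_i$ plus positivity) that "peak $\le j \iff r_{j+1} < 1$" and "peak $\ge j \iff r_j \ge 1$."
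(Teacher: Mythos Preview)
Your argument is correct and follows essentially the same route as the paper's own proof: both apply Newton's inequality from Lemma~\ref{unimodequiv}(\ref{rrlc})(\ref{rrlc1}) to obtain $\dfrac{a_{j+1}}{a_j}\le \dfrac{j(n-j)}{(j+1)(n-j+1)}\cdot\dfrac{a_j}{a_{j-1}}$, then telescope down to $a_1/a_0$ (respectively up to $a_{n-2}/a_{n-3}$) to bound the ratio and conclude about the peak via unimodality. Your explicit introduction of the ratios $r_j$ and your discussion of the $a_{n-1}=0$ edge case are cosmetic additions, not substantive differences.
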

\begin{proof}
By Lemma \ref{unimodequiv}\eqref{rrlc}(i)
\[a_j^2\geq \frac{{n\choose j}^2}{{n\choose j+1}{n\choose j-1}}a_{j+1}a_{j-1}=\frac{(j+1)(n-j+1)}{j(n-j)}a_{j+1}a_{j-1}.\]
Then
\bea
\frac{a_{j+1}}{a_j} & \leq & \frac{j(n-j)}{(j+1)(n-j+1)}\cdot \frac{a_j}{a_{j-1}} \\ 
 & \leq & \left(\frac{j}{j+1}\cdot \frac{j-1}{j}\cdot\cdots \cdot\frac{1}{2}\right)\left(\frac{n-j}{n-j+1}\cdot\frac{n-j+1}{n-j+2}\cdot\cdots\cdot\frac{n-1}{n}\right)\frac{a_1}{a_0} \\
 & = & \frac{n-j}{n(j+1)}\cdot\frac{a_1}{a_0}.
\eea
If this value is smaller than $1$, then $a_{j+1}<a_j$ and the peak location is at most $j$.

Similarly, 
\bea
\frac{a_{j}}{a_{j-1}} & \geq & \frac{(j+1)(n-j+1)}{j(n-j)}\cdot \frac{a_{j+1}}{a_{j}} \\ 
 & \geq & \left(\frac{j+1}{j}\cdot \frac{j+2}{j+1}\cdot\cdots \cdot\frac{n-2}{n-3}\right)\left(\frac{n-j+1}{n-j}\cdot\frac{n-j}{n-j-1}\cdot\cdots\cdot\frac{4}{3}\right)\frac{a_{n-2}}{a_{n-3}} \\
 & = & \frac{(n-2)(n-j+1)}{3j}\cdot\frac{a_{n-2}}{a_{n-3}}.
\eea
If this value is greater than $1$, then $a_j>a_{j-1}$ and the peak location is at least $j$.
\end{proof}

\begin{thm}
Suppose $T$ is a tree on $n \ge 3$ vertices with at least  $\rho {n-1 \choose 2}$ paths of length $2$ for some  nonnegative real number $\rho$. Then the peak location of the normalized coefficients $d_0(T),d_1(T),\ldots ,d_{n-2}(T)$ is at most $\lc\frac{2-\rho}{3-\rho}n\rc$.  Since $\rho= 0$ applies to every tree, the peak location is at most $\lc\frac{2}{3}n\rc$ for every tree on $n$ vertices.
\end{thm}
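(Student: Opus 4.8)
The plan is to apply Lemma~\ref{peaklem}(1) with $a_k = d_k(T)$, which by Observation~\ref{scaledpoly} satisfy its hypotheses (positivity, unimodality, and being the coefficients of the real-rooted polynomial $\ell_T(x)$ up to the extra terms in degrees $n-1,n$ that are $0$ and $-4$). To do this I first need to compute, or at least bound, the ratio $d_1(T)/d_0(T)$ in terms of $n$ and the number $W$ of paths of length $2$ in $T$. Since $d_k(T) = 2^k |\delta_k(T)|/2^{n-2}$, we have $d_1(T)/d_0(T) = 2|\delta_1(T)|/|\delta_0(T)|$, so the task reduces to understanding the two lowest coefficients of the distance characteristic polynomial.

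First I would identify $\delta_0(T)$ and $\delta_1(T)$ combinatorially. Here $\delta_0(T) = \det(\D(T))$, and Graham–Pollak's classical formula gives $\det(\D(T)) = (-1)^{n-1}(n-1)2^{n-2}$, so $|\delta_0(T)| = (n-1)2^{n-2}$ and hence $d_0(T) = n-1$. For $\delta_1(T)$, which up to sign is the sum of the $(n-1)\times(n-1)$ principal minors of $\D(T)$ (i.e.\ $(-1)$ times the coefficient of $x$, read off from the characteristic polynomial), I would invoke the known evaluation of these cofactor sums for trees — this is exactly the type of subforest count Graham and Lov\'asz describe, and in fact $d_1(T)$ counts a specific family of subforests. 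The key structural input I expect to need is a formula expressing $d_1(T)$ (equivalently the normalized coefficient in degree $1$) as an affine function of $W$, the number of paths of length $2$; something of the shape $d_1(T) = \alpha n^2 + \beta n + \gamma + \epsilon W$ for explicit constants. With the hypothesis $W \ge \rho\binom{n-1}{2}$, this yields a lower bound on $d_1(T)/d_0(T)$, or rather — since we want an \emph{upper} bound on the peak, and the criterion in Lemma~\ref{peaklem}(1) requires $\frac{n-j}{n(j+1)}\cdot\frac{a_1}{a_0}<1$, i.e.\ a \emph{smaller} $a_1/a_0$ pushes the peak down — I must be careful about the direction: more paths of length $2$ should make the polynomial ``more star-like,'' and for the star the peak is at $\lfloor n/2\rfloor$, so large $\rho$ should give a better (smaller) upper bound. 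So the inequality I actually want is an \emph{upper} bound on $d_1(T)/d_0(T)$ that decreases as $\rho$ increases; I would extract this from the exact degree-$1$ coefficient formula together with the constraint that $T$ is a tree (which also bounds $W$ from above by $\binom{n-1}{2}$, attained by the star).

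Once the bound $d_1(T)/d_0(T) \le f(n,\rho)$ is in hand, the final step is pure algebra: solve the inequality $\frac{n-j}{n(j+1)} f(n,\rho) < 1$ for $j$, check that the threshold works out to $j = \lceil \frac{2-\rho}{3-\rho} n\rceil$ (noting $j \ne n, n-1$ is automatic in the relevant range since $\frac{2-\rho}{3-\rho} \le \frac{2}{3}$ for $\rho \ge 0$), and conclude via Lemma~\ref{peaklem}(1) that the peak is at most $j$. The $\rho = 0$ case then specializes to $\lceil \frac23 n\rceil$. The main obstacle I anticipate is pinning down the exact closed form for $\delta_1(T)$ (or $d_1(T)$) in terms of $n$ and $W$: this requires either a careful cofactor-expansion / Matrix-Tree-style computation for $\D(T)$ or citing the appropriate subforest-counting identity from \cite{GL78}, and getting the constants exactly right is essential because the clean answer $\frac{2-\rho}{3-\rho}$ depends on them delicately. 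A secondary technical point is handling the ceiling correctly — verifying that strict inequality in Lemma~\ref{peaklem}(1) holds at $j = \lceil \frac{2-\rho}{3-\rho}n\rceil$ and not merely at some slightly larger value — which amounts to checking the rounding does not cost us a unit.
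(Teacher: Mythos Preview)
Your proposal is correct and follows essentially the same route as the paper: apply Lemma~\ref{peaklem}(1) to $\ell_T(x)$ via Observation~\ref{scaledpoly}, use Graham--Pollak for $d_0(T)=n-1$, use an exact formula for $d_1(T)$ (the paper cites \cite{EGG76} rather than \cite{GL78}, obtaining $d_1(T)=2n(n-1)-2N_{P_3}(T)-4$), and then bound $r=d_1/d_0<(2-\rho)n+2\rho$ and solve the resulting inequality algebraically to get $j=\lc\frac{2-\rho}{3-\rho}n\rc$. Your anticipated sign analysis is right on target: $N_{P_3}(T)$ enters $d_1(T)$ with a negative coefficient, so the hypothesis $N_{P_3}(T)\ge\rho\binom{n-1}{2}$ indeed gives the needed \emph{upper} bound on $r$.
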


\begin{proof}
By Observation \ref{scaledpoly}, we may apply Lemma \ref{peaklem} to $\ell_T(x)$.  When $0\leq j\leq n-2$ and 
\[\frac{n-j}{n(j+1)}\cdot \frac{d_1(T)}{d_0(T)}<1,\]
the peak location is at most $j$.  Since $d_0(T)$ and $d_1(T)$ are both positive numbers, the inequality is equivalent to 
\[j>\frac{rn-n}{n+r}=n-\frac{n^2+n}{n+r},\text{ where }r=\frac{d_1(T)}{d_0(T)}.\]

The formula $d_0(T)=n-1$ is given in  \cite[Theorem 3]{GP71}.  Defining  $N_{P_{3}}(T)$ to be the number of subtrees of $T$ that are isomorphic to the path $P_3$ on three vertices (of length 2), the formula $d_1(T)=2n(n-1)-2N_{P_3}(T)-4$ follows from    \cite[Theorem 4.1]{EGG76}\footnote{Our notation is slightly different but examination of  \cite[Table 2]{EGG76} clarifies the notation.} %(cited in \cite[Theorem 2.4]{AH14}) 
 by using the definition $d_k(T)=(-1)^{n-1}\delta_k(T)/2^{n-k-2}$. Since $\frac 1 2 \rho(n-1)(n-2)=N_{P_3}(T)\ge n-2$,
\[r=\frac{2n(n-1)-2N_{P_3}(T)-4}{n-1} = \frac{2n(n-1)-\rho(n-1)(n-2)-4}{n-1}<(2-\rho)n+2\rho. \]
Now
\[ n-\frac{n^2+n}{n+r} < n-\frac{n^2+n}{(3-\rho)n+2\rho}= n-\frac{n+1}{3-\rho+(2\rho/n)}\leq n-\frac{n}{3-\rho}=\frac{2-\rho}{3-\rho}n. \]
The last inequality follows from  $\frac{n}{3-\rho}\leq \frac{1}{(2\rho/n)}$, which is justified by  $\rho\leq 1$.

Therefore, $j=\lc\frac{2-\rho}{3-\rho}n\rc$ is an upper bound of the peak location.
\end{proof}

\begin{rem}{\rm If  the number $N_{P_3}(T)$ of paths of length two  is known  for every tree $T$ in a particular family, then $\rho$ can be set equal to $\frac{N_{P_3}(T)}{{n-1\choose 2}}$.  For example, for the star $S_n$ on $n$ vertices, $N_{P_3}(S_n)={n-1\choose 2}$, so  $\rho=1$ and $\lc\frac{2-\rho}{3-\rho}n\rc= \lc\frac{n}{2}\rc$.  Thus for a star our upper bound is equal to (if $n$ is even) or one more than (if $n$ is odd) the known value $\lf\frac n 2\rf$ for the peak of the normalized coefficients for $S_n$ \cite[Theorem 1]{C89}.}
\end{rem}

We will utilize a technique similar to the upper bound in order to derive a lower bound. However, we need the following lemma to provide an estimate for the necessary ratio.

\begin{lem}\label{lowrbound}
For any tree $T$ on $n$ vertices with diameter $d$
\[\frac{d_{n-3}(T)}{ d_{n-2}(T)} < \frac 1 3n d. \]
\end{lem}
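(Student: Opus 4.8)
The plan is to express both $d_{n-2}(T)$ and $d_{n-3}(T)$ in terms of combinatorial data of the tree, in the same spirit as the formulas $d_0(T)=n-1$ and $d_1(T)=2n(n-1)-2N_{P_3}(T)-4$ used in the upper-bound argument, but now reading off the \emph{top} coefficients of the distance characteristic polynomial rather than the bottom ones. Recall $\delta_{n-2}(T)$ and $\delta_{n-3}(T)$ correspond, up to sign and a power of $2$, to the sum of the $2\times 2$ and $3\times 3$ principal minors of $\D(T)$ (since the coefficient of $x^{n-k}$ in $\det(xI-\D(T))$ is $(-1)^k$ times the sum of $k\times k$ principal minors). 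First I would compute $\sum_{i<j}$ of the $2\times 2$ principal minors: each such minor is $-d(v_i,v_j)^2$, so $(-1)^{n-1}\delta_{n-2}(T)$ is, up to the normalizing factor, $\sum_{i<j} d(v_i,v_j)^2$ — essentially the second Wiener-type index. In normalized form this gives $d_{n-2}(T)=\tfrac14\sum_{i<j} d(v_i,v_j)^2$ (the constant to be pinned down by matching Observation~\ref{scaledpoly}). Likewise $d_{n-3}(T)$ is a fixed multiple of $\sum$ over triples $\{i,j,k\}$ of the $3\times 3$ principal minor $\det\begin{pmatrix}0 & a & b\\ a & 0 & c\\ b & c & 0\end{pmatrix} = 2abc$, where $a,b,c$ are the three pairwise distances; so $d_{n-3}(T)$ is a constant times $\sum_{\{i,j,k\}} d(v_i,v_j)\,d(v_i,v_k)\,d(v_j,v_k)$.

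With these two expressions in hand, the ratio $d_{n-3}(T)/d_{n-2}(T)$ becomes a ratio of a sum over triples of products of three distances to a sum over pairs of squares of distances. The key step is then a clean termwise domination: I would fix an edge-pair contribution and bound $\sum_{k \ne i,j} d(v_i,v_k)\,d(v_j,v_k)$ in terms of $d(v_i,v_j)^2$ and $n$ and $d$. The natural bound is that for any third vertex $v_k$, each of $d(v_i,v_k)$ and $d(v_j,v_k)$ is at most the diameter $d$, and summing over the at most $n$ choices of $k$ gives a factor of order $nd^2$ against $d(v_i,v_j)^2$; being slightly more careful — using, say, $d(v_i,v_k)\,d(v_j,v_k) \le d\cdot (d(v_i,v_k)+d(v_j,v_k))/2$ or a direct constant-chasing argument — should tighten the constant to exactly $\tfrac13 nd$ once the normalizing constants from the two principal-minor sums are folded in. I expect the factor $3$ in the two $3\times 3$-minor formula ($2abc$ versus the $a^2+b^2$ appearing in the $2\times 2$ case) together with the symmetrization over which of the three vertices plays the role of the "apex" to be exactly what produces the $\tfrac13$.

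The main obstacle will be bookkeeping the constants precisely: the minus sign in $\ell_T(x)$, the $2^{n-2}$ scaling, the $(-1)^{n-1}$ from Fact~\ref{GLsign}, and the $2^k$ in $d_k(T)$ all have to line up so that $d_{n-2}(T)$ and $d_{n-3}(T)$ come out as honest positive sums of distance-products with the right leading constants; a sign or factor-of-$2$ slip here would throw off the final inequality. A secondary subtlety is that the crude "every distance is at most $d$" bound might overshoot and only yield something like $\tfrac12 nd$ or $nd$, in which case I would need to exploit that if $d(v_i,v_j)$ is large then $v_i$ and $v_j$ lie near opposite ends of a longest path, limiting how large $d(v_i,v_k)\,d(v_j,v_k)$ can be by the triangle inequality $d(v_i,v_k)+d(v_j,v_k)\ge d(v_i,v_j)$ — balancing these forces the constant down to $\tfrac13$. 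Once the termwise bound $\sum_{k}d(v_i,v_k)d(v_j,v_k) < \tfrac13 nd\cdot d(v_i,v_j)^2$-type estimate is established (in whatever normalized units make the leading constants match), summing over all pairs $\{i,j\}$ and comparing to $\sum_{\{i,j\}} d(v_i,v_j)^2$ immediately yields $d_{n-3}(T)/d_{n-2}(T) < \tfrac13 nd$.
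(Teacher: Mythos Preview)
Your setup is exactly right and matches the paper: one computes the $2\times 2$ and $3\times 3$ principal minors to obtain
\[
d_{n-2}(T)=\sum_{i<j}\D_{ij}^2=\tfrac12\tr(\D^2),\qquad
d_{n-3}(T)=\sum_{i<j<k}\D_{ij}\D_{jk}\D_{ki}=\tfrac16\tr(\D^3),
\]
so that $d_{n-3}(T)/d_{n-2}(T)=\tfrac13\,\tr(\D^3)/\tr(\D^2)$. (Your tentative constant $\tfrac14$ for $d_{n-2}$ is off; the normalization collapses to $1$.)

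Where your proposal diverges from the paper is the key inequality. The paper does \emph{not} argue combinatorially from here; it passes to eigenvalues: writing $\tr(\D^k)=\sum_i\lambda_i^k$ gives
\[
\frac{\tr(\D^3)}{\tr(\D^2)}=\frac{\sum_i\lambda_i^3}{\sum_i\lambda_i^2}\le\lambda_{\max}<nd,
\]
the last step because every row sum of $\D$ is below $nd$. That is the whole argument; the $\tfrac13$ comes purely from the $\tfrac16$ versus $\tfrac12$ in the trace expressions, not from any triangle-inequality balancing.

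Your proposed termwise route has a real gap. After symmetrizing, the numerator is $\tfrac13\sum_{i<j}\D_{ij}\sum_{k\ne i,j}\D_{ik}\D_{jk}$, so a termwise comparison with the denominator $\sum_{i<j}\D_{ij}^2$ would require
\[
\sum_{k\ne i,j}\D_{ik}\D_{jk}\;<\;nd\cdot\D_{ij}\quad\text{for every pair }i,j.
\]
This is false already for the path $P_n$ with $i,j$ adjacent near the middle: the left side is $\sum_m m(m\pm 1)\sim n^3/12$, while the right side is $n(n-1)\cdot 1\sim n^2$. The triangle-inequality refinement you suggest does not rescue this, because when $\D_{ij}$ is small there is no lower-bound leverage on $\D_{ik}+\D_{jk}$ that helps. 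So the inequality genuinely cannot be established pairwise; some global mechanism (the spectral bound, or alternatively an AM--GM step $abc\le\tfrac13(a^2b+b^2c+c^2a)$ followed by a row-sum bound) is needed to mix the terms.
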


\begin{proof}
Let $\mathcal{D} := \mathcal{D}(T)$ denote the distance matrix of $T$, and let $\D_{ij}$ denote its $ij$-entry. From \cite[Equations (4c) and (4d)]{EGG76}, % (cited in \cite[Theorem 2.4]{AH14}) , we have

\[ \delta_{n-2}(T)  = (-1)^{n-1} \sum_{i < j } \mathcal{D}_{ij}^2  \]
\[ \delta_{n-3}(T) = \displaystyle(-1)^{n-1}  \sum_{i < j < k} 2 \mathcal{D}_{ij} \mathcal{D}_{jk} \mathcal{D}_{ki}. \]

We will now express the corresponding normalized coefficients in terms of the traces of powers of $\mathcal{D}$. First, let us consider $d_{n-2}(T)$. Since the diagonal entries of $\D$ are all zero,

\[ d_{n-2}(T)  = \sum_{i < j} \mathcal{D}_{ij}^2 = \frac{1}{2}  \sum_{i}  \sum_{j} \mathcal{D}_{ij} \mathcal{D}_{ji} = \frac{1}{2} \sum_{i}  (\mathcal{D}^2)_{ii}  = \frac{1}{2} \tr(\mathcal{D}^2)  \]
where the second equality follows from $\mathcal{D}$ being symmetric. Similarly, for $d_3(T)$, %we have
\begin{eqnarray*}
 d_{n-3}(T) &=&  \sum_{i < j < k}  \mathcal{D}_{ij} \mathcal{D}_{jk} \mathcal{D}_{ki} \\
 &=&  \frac{1}{6} \sum_{\substack{i,j,k \\ {\rm different} }}  \mathcal{D}_{ij} \mathcal{D}_{jk} \mathcal{D}_{ki} \\
  &=&  \frac{1}{6} \sum_{i,j,k}  \mathcal{D}_{ij} \mathcal{D}_{jk} \mathcal{D}_{ki} \\
  &=&  \frac{1}{6} \sum_{i} \sum_{j,k}  \mathcal{D}_{ij} \mathcal{D}_{jk} \mathcal{D}_{ki}  = \frac{1}{6} \sum_{i}   (\mathcal{D}^3)_{ii} = \frac{1}{6} \tr(\mathcal{D}^3) \\
  \end{eqnarray*}
where the third line follows because if any two of $i, j, k$ are equal, then the corresponding entry in $\mathcal{D}$ is 0.

Let $\lambda_1 \le  \lambda_2 \le \ldots \le \lambda_n =: \lambda_{max}$ denote the eigenvalues of $\mathcal{D}(T)$.
Since $\tr(\mathcal{D}^2) = \displaystyle \sum_{i}\lambda_i^2$ and similarly $\tr(\mathcal{D}^3) = \displaystyle \sum_{i}\lambda_i^3$, we have

\[ \frac{d_{n-3}(T)}{ d_{n-2}(T)} =\frac {\frac 16} {\frac 1 2}  \frac{\tr(\D(T)^3)}{\tr(\D(T)^2)} = \frac{1}{3}  \frac{ \sum_{i}\lambda_i^3}{ \sum_{i}\lambda_i^2} \le \frac{1}{3}  \frac{ \lambda_{max} \sum_{i}\lambda_i^2}{ \sum_{i}\lambda_i^2}  = \frac 13 \lambda_{max} <  \frac13 nd  \]  where the last inequality comes from that the the row sums of $\D$ are bounded above by $nd$.
\end{proof}

\begin{thm}
Let $T$ be a tree on $n \ge 3$ vertices with diameter $d$. Then, the peak location of the normalized coefficients $d_0(T),d_1(T),\ldots ,d_{n-2}(T)$ is at least $\left\lfloor\frac{n-2}{1+d}\right\rfloor$.
\end{thm}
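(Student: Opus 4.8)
The plan is to apply part~(2) of Lemma~\ref{peaklem} to the real-rooted polynomial $\ell_T(x)$ of Observation~\ref{scaledpoly}, whose coefficients in degrees $0,\dots,n-2$ are exactly the normalized coefficients $d_0(T),\dots,d_{n-2}(T)$. These are positive by Observation~\ref{scaledpoly} and form a unimodal sequence by Theorem~\ref{Tunimod}, so Lemma~\ref{peaklem} is applicable. The target index is $j:=\big\lfloor\frac{n-2}{1+d}\big\rfloor$, and the goal is to show the peak of $d_0(T),\dots,d_{n-2}(T)$ occurs at some index at least $j$.

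First I would dispose of a trivial case. If $d\ge n-2$, then $\frac{n-2}{1+d}<1$, so $j=0$, and since the peak location of a unimodal sequence is always a nonnegative integer, the statement holds automatically; this covers the path $P_n$ and all trees on at most four vertices. So I may assume $d\le n-3$, which forces $n\ge5$ because every tree on at least three vertices has diameter at least $2$ (in particular $d_{n-3}(T)$ is then defined). Now $1+d\le n-2$ gives $j\ge1$, while $j\le\frac{n-2}{1+d}\le n-2$ holds trivially; hence $j\notin\{0,n-1,n\}$ and Lemma~\ref{peaklem}(2) may be applied with this $j$.

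By Lemma~\ref{peaklem}(2) it then suffices to verify
\[
\frac{(n-2)(n-j+1)}{3j}\cdot\frac{d_{n-2}(T)}{d_{n-3}(T)}>1 .
\]
Here Lemma~\ref{lowrbound} enters: it gives $\frac{d_{n-2}(T)}{d_{n-3}(T)}>\frac{3}{nd}$, so it is enough to check that $\frac{(n-2)(n-j+1)}{3j}\cdot\frac{3}{nd}\ge1$, i.e., that $(n-2)(n-j+1)\ge jnd$. From $j(1+d)\le n-2$ I get $jnd=jn(1+d)-jn\le n(n-2)-jn$, whereas $(n-2)(n-j+1)=n(n-2)+(n-2)-j(n-2)$; subtracting these two estimates yields $(n-2)(n-j+1)-jnd\ge(n-2)+2j>0$, which closes the argument.

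I do not expect a serious obstacle, since the substance is already packaged in Lemmas~\ref{peaklem} and~\ref{lowrbound}. The two points needing care are (a)~checking that the chosen index $j$ is admissible for Lemma~\ref{peaklem}(2), which is exactly what forces the case split at $d\ge n-2$, and (b)~the elementary inequality $(n-2)(n-j+1)\ge jnd$, which is where the value $\big\lfloor\frac{n-2}{1+d}\big\rfloor$ gets pinned down — a noticeably larger choice of $j$ would violate this bound in general.
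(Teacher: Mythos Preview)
Your proof is correct and follows essentially the same approach as the paper: apply Lemma~\ref{peaklem}(2) to $\ell_T(x)$ and feed in the bound $\frac{d_{n-3}(T)}{d_{n-2}(T)}<\frac{1}{3}nd$ from Lemma~\ref{lowrbound}. The only differences are cosmetic---the paper rewrites the hypothesis of Lemma~\ref{peaklem}(2) as $j<\frac{(n-2)(n+1)}{(n-2)+3/r}$ and bounds that fraction, whereas you verify $(n-2)(n-j+1)\ge jnd$ directly---and you are a bit more careful than the paper in separating out the trivial case $j=0$.
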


\begin{proof}
By Observation \ref{scaledpoly}, we may apply Lemma \ref{peaklem} to $\ell_T(x)$.  When $1 \le j\leq n-2$ and 
\[\frac{(n-2)(n-j+1)}{3j}\cdot \frac{d_{n-2}(T)}{d_{n-3}(T)}>1,\]
the peak location is at least $j$.  Since $d_{n-2}(T)$ and $d_{n-3}(T)$ are both positive numbers, the inequality is equivalent to 
\[j<\frac{(n-2)(n+1)}{(n-2)+(3/r)},\text{ where }r=\frac{d_{n-2}(T)}{d_{n-3}(T)}.\]

By applying Lemma \ref{lowrbound}, $\frac{3}{r}<nd$.

Thus,
\bea
\frac{(n-2)(n+1)}{(n-2)+(3/r)} & > &\frac{(n-2)(n+1)}{(1+d)n-2}\\
& = & \frac{n-2}{1+d}\cdot \frac{n+1}{n-2/(1+d)}\\
& > & \frac{n-2}{1+d}.
\eea
So $j=\left\lfloor\frac{n-2}{1+d}\right\rfloor$ is a lower bound of the peak location.
\end{proof}

%%%%%%%%%%%%%%%%%%%%%%%%%%%

\section{Graphs that are not trees}\label{ssgraph}
Since the distance matrix of any graph $G$ is a real symmetric matrix, the coefficient sequence of the distance characteristic polynomial of $G$ is log-concave.  However, it need not be the case that all coefficients  
of the distance characteristic polynomial have the same sign.  Thus statements analogous to those in Theorem~\ref{Tunimod} can be false for graphs that are not trees. 

\begin{ex}\label{Aunimod}
The normalized coefficients and absolute values of the coefficients of the distance characteristic polynomial are not unimodal (and hence not log-concave) for the Heawood graph $H$  shown in Figure \ref{figHeawPlot}.  The coefficients of the distance characteristic polynomial are log-concave but not unimodal.  

\begin{figure}[h!]
\begin{center}
\scalebox{.33}{\includegraphics{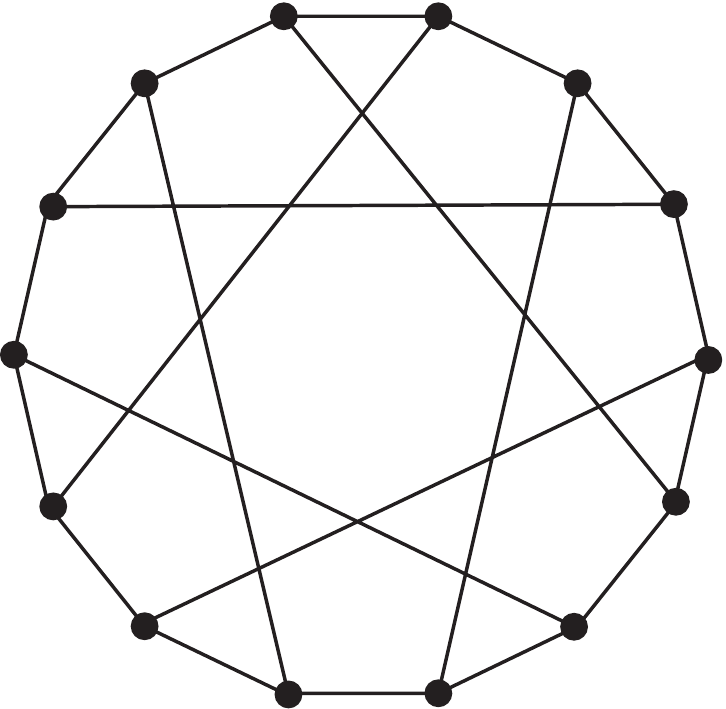}}
\caption{The Heawood graph $H$
 \label{figHeawPlot}}
\end{center}
\end{figure}
%\vspace{-10pt}

The distance characteristic polynomial of $H$ is
\bea
p_{\D(H)}(x)&= & x^{14} - 441x^{12} - 6328x^{11} - 36456x^{10} - 75936x^9 + 104720x^8\\
&& +\ 573696x^7 - 118272x^6 - 1885184x^5 + 973056x^4 \\
&& +\ 2795520x^3 -
3885056x^2 + 1892352x - 331776.
\eea
The values of $d_k(H)$, for $k=0,\dots,12$  are 
\[
81,~924,~3794,~5460,~3801,~14728,~1848,~17928,~6545,~9492,~9114,~3164,~441.
\]
\end{ex}

\bigskip
\noindent\textbf{Acknowledgment.}  We thank Ben Braun, Steve Butler, Jay Cummings, Jessica De Silva,  Wei Gao, and Kristin Heysse for stimulating discussions, and gratefully acknowledge financial support for this research from NSF 1500662, Elsevier, and the International Linear Algebra Society.  %We thank the Graduate Research Workshop in Combinatorics (GRWC)  for providing  a wonderful research environment  where this research took place, and the Institute for Mathematics and its Applications (IMA) whose annual program Discrete Structures: Analysis and Applications built connections that were essential to this research. 

%%%%%%%%%%%%%%%%%%%%%%%%%%%%%%%%%%%%%%%%%%%


\begin{thebibliography}{99}
\bibitem{AH14} M. Aouchiche and P. Hansen. Distance spectra of graphs: A survey. {\em Linear Algebra Appl.}, 458 (2014),
301--386.


\bibitem{brandensurvey} P. Br{\"{a}}nd{\'{e}}n. Unimodality, log-concavity, real-rootedness, and
  beyond. In \emph{Handbook of Enumerative Combinatorics}, M. Bona, Editor,  CRC Press, Boca Raton, 2015. %Preprint available at \url{http://arxiv.org/abs/1410.6601v1}.

\bibitem{B15} B. Braun. Unimodality Problems in Ehrhart Theory. In {\em Recent Trends in Combinatorics}, IMA Vol. Math. Appl. 159,  A. Beveridge, J. Griggs, L. Hogben, G. Musiker, P. Tetali, Editors, Springer, Switzerland, 2016, pp. 687--711.  %Preprint available  at \url{http://arxiv.org/abs/1505.07377}.


\bibitem{C89} K.L. Collins. On a conjecture of Graham and Lov\'asz about distance matrices. {\em Discrete Appl. Math.} 25
(1989), 27--35.

\bibitem{comtet} L. Comtet.  \emph{Advanced combinatorics: The art of finite and infinite expansions}.  Reidel Publishing Co., Dordrecht, 1974.


 \bibitem{PersiTalk} P. Diaconis.  From loop switching to graph embedding.  Plenary talk at Connections in Discrete Mathematics: 
A celebration of the work of Ron Graham.  June 15 -- 19, 2015
Simon Fraser University
Vancouver, BC, Canada.  Abstract available at  \url{http://sites.google.com/site/connectionsindiscretemath}.

\bibitem{EGG76}  M. Edelberg, M.R. Garey, R.L. Graham. On the distance matrix of a tree. {\em Discrete Math.} 14 (1976), 23--39.

\bibitem{GL78}  R.L. Graham and L. Lov\'asz. Distance matrix polynomials of trees.  {\em Adv. Math.} 29 (1978), 60--88.

\bibitem{GP71}  R.L. Graham and H.O. Pollak. On the addressing problem for loop switching. {\em Bell Syst. Tech. J.} 50 (1971), 2495--2519.

\bibitem{code} J. C.-H. Lin.  {\em Sage}  code for determining the peak of the unimodal normalized coefficients of the distance characteristic polynomial of a tree.  {\em Sage}  worksheet published on the Iowa State {\em Sage} server at \url{https://sage.math.iastate.edu/home/pub/41/}.
PDF available at  \url{http://orion.math.iastate.edu/lhogben/TreePeak.pdf}.

\bibitem{sage}
W. Stein. {\em Sage: {O}pen {S}ource {M}athematical {S}oftware}. The
  Sage~Group,  \url{http://www.sagemath.org}.


\end{thebibliography}
\end{document}